\documentclass[12pt]{amsart}
\usepackage{amsmath,amssymb,color,cite,verbatim}
\usepackage{etoolbox}
\apptocmd{\sloppy}{\hbadness 10000\relax}{}{}

\numberwithin{equation}{section}

\newtheorem{thm}[equation]{Theorem}

\newtheorem{lemma}[equation]{Lemma}
\newtheorem{cor}[equation]{Corollary}

\theoremstyle{definition}

\newcommand{\bP}{\mathbb{P}}
\newcommand{\C}{\mathbb{C}}

\DeclareMathOperator{\mult}{mult}

\newcommand{\abs}[1]{\lvert #1 \rvert}

\usepackage[pdftex,colorlinks,pagebackref, bookmarks=false]{hyperref}

\begin{document}

\title[Value sharing on a compact Riemann surface]{Shared values of meromorphic functions on a compact Riemann surface}

\author{Zhiguo Ding}
\address{
  Hunan Institute of Traffic Engineering,
  Heng\-yang, Hunan 421001 China
}
\email{ding8191@qq.com}

\author{Michael E. Zieve}
\address{
  Department of Mathematics,
  University of Michigan,
  530 Church Street,
  Ann Arbor, MI 48109-1043 USA
}
\email{zieve@umich.edu}
\urladdr{http://www.math.lsa.umich.edu/$\sim$zieve/}

\date{\today}

\begin{abstract}
We prove a new bound on the number of shared values of distinct meromorphic functions on a compact Riemann surface, explain a mistake in a previous paper on this topic, and give a survey of related questions.
\end{abstract}

\thanks{
The authors thank the journal's editors for their encouragement to write this paper.
The second author thanks the National Science Foundation for support under grant DMS-1601844.}

\maketitle


\section{Introduction} 

Let $X$ be a compact Riemann surface of genus $g$, and let $p$ and $q$ be distinct nonconstant meromorphic functions on $X$.
Let $S$ be the set of all points $\alpha$ in the Riemann sphere $\widehat \C$ for which the sets $p^{-1}(\alpha)$ and $q^{-1}(\alpha)$ are identical.  The main result of \cite{B} asserts that
\begin{equation}\label{B}
(\abs{S}-4)\cdot(\deg p + \deg q)\le 2g-2.
\end{equation}
However, as we will explain, and as the author of \cite{B} has confirmed via email, the argument in \cite{B} contains a crucial mistake so that it does not prove \eqref{B}; moreover, currently it is not known whether \eqref{B} is always true.  We will prove the following inequality, which is stronger than all inequalities in the literature that resemble \eqref{B} (other than \eqref{B} itself):

\begin{thm} \label{main}
We have
\begin{equation}\label{S}
(\abs{S}-2)\cdot\max(\deg p,\,\deg q)\le 2g-2+\deg p+\deg q.
\end{equation}
\end{thm}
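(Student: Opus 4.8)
The plan is to postcompose $p$ and $q$ with a M\"obius transformation so that $\infty\notin S$, and then to combine an upper bound for the number of poles of the auxiliary function $h:=p-q$ with a lower bound for its number of zeros.

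First I would dispose of the trivial cases. If $\abs{S}\le 2$ then the left side of \eqref{S} is $\le 0$ while the right side is $2g-2+\deg p+\deg q\ge 2g\ge 0$, so we may assume $\abs{S}\ge 3$. Since $p\ne q$, the set $\widehat\C\setminus S$ is nonempty: otherwise $p^{-1}(\alpha)=q^{-1}(\alpha)$ for every $\alpha$, so $p-q$ vanishes on the dense set of points that are poles of neither $p$ nor $q$, forcing $p=q$. Choosing $\beta\in\widehat\C\setminus S$ and a M\"obius transformation $\phi$ with $\phi(\beta)=\infty$, and replacing $(p,q)$ by $(\phi\circ p,\,\phi\circ q)$, leaves $g$, $\deg p$, and $\deg q$ unchanged and replaces $S$ by $\phi(S)$; hence we may assume $\infty\notin S$, and, after possibly interchanging $p$ and $q$, that $\deg p\ge\deg q$, so $\max(\deg p,\deg q)=\deg p$.

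Next I would set $h:=p-q$. If $h$ were a constant $c$, then $c\ne0$ (as $p\ne q$), and for each finite $\alpha$ the sets $p^{-1}(\alpha)$ and $q^{-1}(\alpha)=p^{-1}(\alpha+c)$ would be nonempty and disjoint, forcing $S\subseteq\{\infty\}$, contrary to $\infty\notin S$ and $\abs{S}\ge3$; so $h$ is a nonconstant meromorphic function on $X$. Since every pole of $h$ is a pole of $p$ or of $q$, we get $\deg h\le\deg p+\deg q$. On the other hand, for each $\alpha\in S$ and each $x\in p^{-1}(\alpha)$ we have $q(x)=\alpha$ (because $p^{-1}(\alpha)=q^{-1}(\alpha)$) and $x$ is not a pole of $h$ (because $\alpha\ne\infty$), so $h(x)=0$; as the fibers $p^{-1}(\alpha)$, $\alpha\in S$, are pairwise disjoint, $h$ has at least $\sum_{\alpha\in S}\abs{p^{-1}(\alpha)}$ zeros, so $\sum_{\alpha\in S}\abs{p^{-1}(\alpha)}\le\deg h\le\deg p+\deg q$. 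Finally, Riemann--Hurwitz for $p\colon X\to\widehat\C$ gives $\sum_{\alpha\in\widehat\C}\bigl(\deg p-\abs{p^{-1}(\alpha)}\bigr)=2\deg p+2g-2$, and every summand is nonnegative, so $\sum_{\alpha\in S}\bigl(\deg p-\abs{p^{-1}(\alpha)}\bigr)\le 2\deg p+2g-2$; subtracting this from $\abs{S}\deg p$ and comparing with the previous inequality yields $(\abs{S}-2)\deg p\le\deg p+\deg q+2g-2$, which is \eqref{S}.

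I expect the only real subtlety to be the normalization. Once $\infty\notin S$, the two estimates above are essentially forced and fit together exactly; the point of first moving a non-shared value to $\infty$ is that it makes every one of the $\abs{S}$ shared values finite, hence a genuine zero of $h$, and this is precisely what allows the crude bound $\deg h\le\deg p+\deg q$ together with the trivial Riemann--Hurwitz estimate for $p$ alone to deliver the coefficient $\abs{S}-2$ rather than $\abs{S}-3$. (If one instead keeps $\infty\in S$, one is forced to bound $\deg h$ more carefully in terms of the shared poles before reaching the same conclusion.) Beyond that, the only things to check are the degenerate cases isolated above and the harmless observation that passing from each fiber divisor to its support discards only ramification already paid for by Riemann--Hurwitz.
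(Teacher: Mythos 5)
Your proof is correct and follows essentially the same route as the paper's: Riemann--Hurwitz applied to the fibres over the shared values, combined with the observation that, after a M\"obius change making every shared value finite, $p^{-1}(S)$ consists of zeros of $p-q$, whose number is at most $\deg(p-q)\le\deg p+\deg q$. The only cosmetic differences are that you normalize $\deg p\ge\deg q$ and apply Riemann--Hurwitz to $p$ alone, and that you work directly with $S$ (proving $S\ne\widehat\C$ to find a value to send to $\infty$, with finiteness of $S$ falling out of your zero count), whereas the paper runs the argument for an arbitrary finite subset $T\subseteq S$ and then deduces that $S$ is finite.
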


Theorem~\ref{main} implies the following inequality from \cite{Schweizer}, which more closely resembles \eqref{B}:

\begin{cor}\label{cor}
We have
\begin{equation}\label{S0}
(\abs{S}-4)\cdot \max(\deg p,\,\deg q) \le 2g-2.
\end{equation}
\end{cor}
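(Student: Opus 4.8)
The plan is to deduce Corollary~\ref{cor} directly from Theorem~\ref{main}, using only the elementary observation that for any real numbers $a$ and $b$ one has $a+b=2\max(a,b)-\lvert a-b\rvert\le 2\max(a,b)$. Writing $d_p=\deg p$ and $d_q=\deg q$, I would apply this with $a=d_p$ and $b=d_q$ to replace the term $\deg p+\deg q$ on the right-hand side of \eqref{S}, obtaining
\[
(\abs{S}-2)\cdot\max(d_p,d_q)\le 2g-2+2\max(d_p,d_q).
\]
Subtracting $2\max(d_p,d_q)$ from both sides then gives exactly \eqref{S0}, which is the assertion of the corollary.

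Since the entire content is carried by Theorem~\ref{main}, I do not expect any genuine obstacle here: the passage from \eqref{S} to \eqref{S0} is a single line of arithmetic, valid without any case distinction, because subtracting the nonnegative quantity $2\max(d_p,d_q)$ from both sides of a true inequality preserves it. (One could instead re-derive \eqref{S0} by reworking the proof of Theorem~\ref{main}, but there is no reason to do so.) It is perhaps worth recording that the argument actually yields the marginally sharper bound
\[
(\abs{S}-4)\cdot\max(\deg p,\deg q)\le 2g-2-\lvert\deg p-\deg q\rvert ,
\]
so that equality in \eqref{S0} forces $\deg p=\deg q$; in that case \eqref{S0} becomes $(\abs{S}-4)\cdot 2\deg p\le 2g-2$, which coincides with the inequality \eqref{B} whose general validity the introduction leaves open. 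Thus \eqref{S0} recovers \eqref{B} when $\deg p=\deg q$ and is strictly weaker otherwise, consistent with the discussion above.
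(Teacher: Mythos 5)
Your proof is correct and is essentially identical to the paper's: both combine \eqref{S} with $\deg p+\deg q\le 2\max(\deg p,\deg q)$ and subtract $2\max(\deg p,\deg q)$ from both sides. One caveat about your closing aside (which is not part of the proof itself): when $\deg p=\deg q$, inequality \eqref{S0} reads $(\abs{S}-4)\cdot\deg p\le 2g-2$, not $(\abs{S}-4)\cdot 2\deg p\le 2g-2$, so it does \emph{not} recover \eqref{B} in that case; \eqref{B} would still be stronger by a factor of $2$ on the left-hand side.
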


In this note we prove these results, explain the step in the proof in \cite{B} where the mistake occurs, and survey related topics.


\section{Proofs}
In this section we prove Theorem~\ref{main} and Corollary~\ref{cor}.  We maintain the notation from the first two sentences of the introduction.

\begin{lemma}\label{diff}
We have
\[
\deg(p-q)\le\deg p+\deg q.
\]
\end{lemma}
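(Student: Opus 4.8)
The plan is to use the standard description of the degree of a nonconstant meromorphic function $f$ on $X$ as the number of poles of $f$ counted with multiplicity; equivalently, $\deg f=\sum_{x\in X}\max(0,-\mathrm{ord}_x f)$, where $\mathrm{ord}_x f$ denotes the order of vanishing of $f$ at $x$. First I would dispose of the degenerate case: if $p-q$ is constant then, since $p\ne q$, it is a nonzero constant, so $\deg(p-q)=0$ and the asserted inequality is immediate. Hence I may assume $p-q$ is nonconstant, so that the displayed formula applies to $p-q$.

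Next, for each point $x\in X$ write $a_x=\max(0,-\mathrm{ord}_x p)$ and $b_x=\max(0,-\mathrm{ord}_x q)$ for the pole orders of $p$ and $q$ at $x$, with the convention that these equal $0$ when $x$ is not a pole. The key local observation is the ultrametric inequality $\mathrm{ord}_x(p-q)\ge\min(\mathrm{ord}_x p,\,\mathrm{ord}_x q)$, which shows that the pole order of $p-q$ at $x$ is at most $\max(a_x,b_x)$, and hence at most $a_x+b_x$ since both summands are nonnegative. In particular every pole of $p-q$ is a pole of $p$ or of $q$.

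Finally I would sum this inequality over all $x\in X$. Applying the degree formula three times, the left side sums to $\deg(p-q)$ while the right side sums to $\sum_{x}a_x+\sum_{x}b_x=\deg p+\deg q$, giving the claim. Equivalently, one can phrase the whole argument divisorially: the polar divisor of $p-q$ is dominated by the sum of the polar divisors of $p$ and of $q$, and then one takes degrees.

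I do not expect any real obstacle here: the statement is essentially a bookkeeping fact about pole orders, and the only points requiring attention are the degenerate case $p-q\in\C$ and the (standard) convention that a nonzero constant function has degree $0$. In fact the bound is quite wasteful — the sharper local estimate $\max(a_x,b_x)$ is available — but the weaker stated inequality is all that will be needed in what follows.
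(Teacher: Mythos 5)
Your argument is correct and is essentially the paper's own proof: both compare pole orders of $p-q$ pointwise with those of $p$ and $q$ and then sum, using that the degree of a meromorphic function on $X$ equals the total order of its poles. Your treatment of the constant case and the sharper local bound $\max(a_x,b_x)$ are fine but inessential refinements.
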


\begin{proof}
Every order-$k$ pole of $p-q$ must be a pole of order at least $k$ in at least one of $p$ or $q$, so the sum of the orders of the poles of $p-q$ is at most the sum of the orders of the poles of $p$ and $q$.  Since the sum of the orders of the poles of a meromorphic function on $X$ equals the degree of the function, the conclusion follows.
\end{proof}

\begin{proof}[Proof of Theorem~\ref{main}]
Let $T$ be a finite subset of $S$.  Replace $p,q,T$ by $\mu\circ p$, $\mu\circ q$, and $\mu(T)$ for a suitable M\"obius transformation $\mu(z)$, in order to assume that $\infty\notin T$.
The Riemann--Hurwitz formula for $p$ asserts that
\[
2g-2 = -2\deg p + \sum_{\beta\in X} (\mult_p(\beta)-1),
\]
where $\mult_p(\beta)$ is the local multiplicity of $p$ near $\beta$.  For any $\alpha\in\widehat\C$ we have $\sum_{\beta\in p^{-1}(\alpha)} \mult_p(\beta)=\deg p$, so that
\begin{equation}\label{rhp}
\begin{aligned}
2g-2 &\ge -2\deg p + \sum_{\alpha\in T}\sum_{\beta\in p^{-1}(\alpha)} (\mult_p(\beta)-1) \\
&= -2\deg p + \sum_{\alpha\in T} (\deg p-\abs{p^{-1}(\alpha)}) \\
&= -2\deg p + \abs{T}\deg p - \abs{p^{-1}(T)}.
\end{aligned}
\end{equation}
Likewise
\begin{equation}\label{rhq}
2g-2\ge -2\deg q + \abs{T}\deg q - \abs{q^{-1}(T)}.
\end{equation}
Since $p^{-1}(T)=q^{-1}(T)$ by hypothesis, the previous two inequalities imply that
\begin{equation}\label{proof}
2g-2 \ge (\abs{T}-2)\cdot\max(\deg p,\,\deg q) - \abs{p^{-1}(T)}.
\end{equation}
Since $p^{-1}(T)=q^{-1}(T)$ and $\infty\notin T$,
the set $p^{-1}(T)$ is contained in the set of zeroes of the meromorphic function $p-q$.  This function is nonzero by hypothesis, so it has at most as many zeroes as its degree.  Thus Lemma~\ref{diff} implies that
\[
\abs{p^{-1}(T)}\le\deg(p-q)\le \deg p + \deg q.
\]
Combining this with \eqref{proof} yields
\begin{equation}\label{proof2}
2g-2 \ge (\abs{T}-2)\cdot\max(\deg p,\,\deg q) - (\deg p + \deg q).
\end{equation}
Thus
\begin{equation}\label{pT}
\abs{T}\le 2 + \frac{2g-2+\deg p+\deg q}{\max(\deg p,\,\deg q)},
\end{equation}
so that in particular $T$ is bounded.
It follows that $S$ is finite, so we may choose $T$ to be $S$ in \eqref{proof2} in order to obtain \eqref{S}, which concludes the proof.
\end{proof}

\begin{proof}[Proof of Corollary~\ref{cor}]
Combining \eqref{S} with the inequality
\[
\deg p+\deg q\le 2\cdot\max(\deg p,\,\deg q)
\]
yields
\begin{align*}
(\abs{S}-2)\cdot\max(\deg p,\,\deg q)&\le 2g-2+\deg p+\deg q \\
&\le 2g-2+2\cdot\max(\deg p,\,\deg q),
\end{align*}
and subtracting $2\cdot\max(\deg p,\,\deg q)$ from both sides yields \eqref{S0}.
\end{proof}


\section{The mistake in \cite{B}}

We now compare the argument in \cite{B} with our proof of Theorem~\ref{main}.  The argument in \cite{B} also uses the Riemann--Hurwitz formula to deduce the inequalities \eqref{rhp} and \eqref{rhq}, and proves Lemma~\ref{diff} by a different argument than ours.  It then claims that \eqref{B} follows from these results and the inequality
\begin{equation}\label{ballico}
2g-2\ge -2\deg(p-q) + (\deg(p-q)-\abs{p^{-1}(T)}).
\end{equation}
Equations \eqref{rhp} and \eqref{rhq} have the form $\abs{T}\le A+B\cdot\abs{p^{-1}(T)}$ where $A$ and $B$ are constants depending on $g$ and the degrees of $p$ and $q$, with $B>0$.  Thus, in order to deduce an upper bound on $\abs{T}$ which depends only on $g$ and the degrees of $p$ and $q$, one needs an upper bound on $\abs{p^{-1}(T)}$.  However, \eqref{ballico} instead yields a lower bound on $\abs{p^{-1}(T)}$, 
and Lemma~\ref{diff} does not involve $\abs{T}$ or $\abs{p^{-1}(T)}$, 
so it is not possible to
obtain an upper bound on $\abs{T}$ of the desired form  by combining  \eqref{rhp}, \eqref{rhq}, \eqref{ballico} and Lemma~\ref{diff}.

It is reasonable to guess that the author of \cite{B} was attempting to produce a similar proof to the one we gave for \eqref{S} (except for his different proof of Lemma~\ref{diff}), since \cite{B} begins by saying that its proof was suggested by reading \cite{Sauerrat}, and the argument in \cite{Sauerrat} is the specialization to the case $X=\widehat \C$ of the argument used in the same author's previous paper \cite{Sauer} to prove a weaker version of Corollary~\ref{cor}.


\section{Related results}

We conclude with a quick survey of related literature, in which we continue to use the notation from the first two sentences of the introduction.

\begin{itemize}
\item Another bound is $\abs{S}\le 2+\sqrt{2g+2}$, which improves the best bound deducible from Corollary~\ref{cor} that depends only on $X$ and not on the functions $p$ and $q$ \cite{Schweizer}.

\item Stronger bounds on $\abs{S}$ are known for special classes of Riemann surfaces: for instance, if $X$ is hyperelliptic then $\abs{S}\le 5$ \cite{Schweizer}.

\item Stronger bounds on $\abs{S}$ are known when at least one element of $S$ has the same preimages \emph{counting multiplicities} under $p$ as it does under $q$ \cite{Sauer,Schweizer}.

\item Examples of $p,q,X$ where $S$ is ``large" appear in \cite{Pizer,Sauer,Sauerrat,Schweizer,Schweizer2}.

\item There are analogous results for holomorphic maps between prescribed compact Riemann surfaces \cite{Sauer}, and for holomorphic maps $p,q\colon X\to\bP^n(\C)$ having the same preimages of several hyperplanes \cite{RU,XR}.

\item Results about ``unique range sets" for meromorphic functions on $X$ appear in \cite{AW}; by definition, such a set is a finite subset $T$ of $\widehat \C$ for which any two distinct nonconstant meromorphic functions on $X$ have distinct preimages of $T$.

\item If there are three disjoint nonempty finite subsets $T_1,T_2,T_3$ of $\widehat \C$ such that each $T_i$ has the same preimages under $p$ (counting multiplicities) as it does under $q$, then $g\circ p=g\circ q$ for some nonconstant rational function $g(z)$ \cite{SZ}.

\item There are related results in case $X$ is either the complement of a finite subset of a compact Riemann surface  \cite{Schweizer2}, an open Riemann surface with a harmonic exhaustion  \cite{CZ,Schmid}, or a higher-dimensional algebraic variety \cite{D,L,L2}.

\end{itemize}



\end{document}